\newtheorem{remark}{Remark}
\newtheorem{assumption}{Assumption}
\newcommand{\hP}{\hat\dbP}
\newcommand{\ud}{\,\mathrm{d}}
\newcommand{\Var}{\mathrm{Var}}
\newcommand{\half}{\frac{1}{2}}
\newcommand{\transpose}{^{\operatorname{T}}}
\newcommand{\ba}{\begin{array}}
\newcommand{\ea}{\end{array}}
\newcommand{\be}{\begin{equation}}
\newcommand{\ee}{\end{equation}}
\newcommand{\bee}{\begin{equation*}}
\newcommand{\eee}{\end{equation*}}
\newcommand{\bea}{\begin{eqnarray}}
\newcommand{\eea}{\end{eqnarray}}
\newcommand{\beaa}{\begin{eqnarray*}}
\newcommand{\eeaa}{\end{eqnarray*}}
\def\dbE{\mathbb{E}}
\def\cF{{\cal F}}
\def\cN{{\cal N}}
\def\cT{{\cal T}}
\def\hE{\mathbb{E}}
\def\hP{\mathbb{P}}
\def\hR{\mathbb{R}}
\newcommand{\basa}{\begin{assumption}}
\newcommand{\easa}{\end{assumption}}
\newcommand{\bas}{\begin{assum}}
\newcommand{\eas}{\end{assum}}
\def\1{{\bf 1}}
\def\:{\!:\!}
\begin{document}
\newcommand{\rh}[1]{\textcolor{blue}{\textsf{[RH: #1]}}}
\newcommand{\zz}[1]{\textcolor{red}{\textsf{[ZZ: #1]}}}
\newtheorem{thm}{Theorem}[section]
\newtheorem{lem}[thm]{Lemma}
\newtheorem{cor}[thm]{Corollary}
\newtheorem{prop}[thm]{Proposition}
\newtheorem{rem}[thm]{Remark}
\newtheorem{eg}[thm]{Example}
\newtheorem{defn}[thm]{Definition}
\newtheorem{assum}[thm]{Assumption}

\renewcommand {\theequation}{\arabic{section}.\arabic{equation}}
\def\thesection{\arabic{section}}

\title{Convergence of the Backward Deep BSDE Method with Applications to Optimal Stopping Problems}

\author{Chengfan Gao\thanks{School of Economics, Fudan University, Shanghai, China, 200433, {\em 19300680118@fudan.edu.cn}. }\and Siping Gao \thanks{School of Economics, Fudan Univeristy, Shanghai, China, 200433, {\em 19307110436@fudan.edu.cn}.} \and Ruimeng Hu\thanks{Department of Mathematics, and Department of Statistics and Applied Probability, University of California, Santa Barbara, CA, 93106-3080, {\em rhu@ucsb.edu}.} \and  Zimu Zhu \thanks{Corresponding author. Departments of Statistics and Applied Probability, University of California, Santa Barbara, CA, 93106-3110, \em{zimuzhu@ucsb.edu}.}}
\date{\today}
\maketitle

\begin{abstract}



The optimal stopping problem is one of the core problems in financial markets, with broad applications such as pricing American and Bermudan options. The deep BSDE method [Han, Jentzen and E, PNAS, 115(34):8505-8510, 2018] has shown great power in solving high-dimensional forward-backward stochastic differential equations (FBSDEs), and inspired many applications. However, the method solves backward stochastic differential equations (BSDEs) in a forward manner, which can not be used for optimal stopping problems that in general require running BSDE backwardly. To overcome this difficulty, a recent paper [Wang, Chen, Sudjianto, Liu and Shen, arXiv:1807.06622, 2018] proposed the backward deep BSDE method to solve the optimal stopping problem. In this paper, we provide the rigorous theory for the backward deep BSDE method. Specifically, 1. We derive the {\it a posteriori} error estimation, i.e., the error of the numerical solution can be bounded by the training loss function; and; 2. We give an upper bound of the loss function, which can be sufficiently small subject to universal approximations. We give two numerical examples, which present consistent performance with the proved theory. 


\end{abstract}


\textbf{Key words}: optimal stopping problems, backward deep BSDE Method, decoupled FBSDE, Bermudan option.
\\

\textbf{MSC codes}: 60G40, 60H35, 65C30

\section{Introduction}\label{sec:intro}

In financial markets, one of the core problems is to price derivatives such as Bermudan options or American options. When the number of underlying assets is large, it leads to  a high-dimensional optimal stopping problem. Deep learning methods have unlocked the possibility of solving high-dimensional problems with complex structures, which is not feasible with traditional numerical methods. Examples include recent breakthrough machine learning methods for high-dimensional nonlinear partial differential equations (PDEs) and backward stochastic differential equations (BSDEs) \cite{han2017deep,han2018solving}, and machine learning methods on stochastic control and differential games \cite{carmona2019convergence,hu2021deep}; see a review paper \cite{hu2022recent} and the references therein. 

However, the aforementioned deep learning algorithms (e.g., \cite{han2017deep,han2018solving}) for BSDEs can not be used to solve optimal stopping problems. Due to the nature of their algorithm design,  the BSDEs are simulated in a forward manner, while optimal stopping problems usually require running BSDEs backwardly. Therefore, new machine-learning methods are needed to address this issue. For example, in \cite{wang2018deep}, the authors proposed the backward deep BSDE method for solving the LIBOR market model with application to Bermudan options. Moreover, there have been other papers that utilize deep learning (DL) to compute optimal stopping problems and/or Bermudan options, e.g., \cite{bayraktar2022deep,becker2019deep,becker2020pricing,herrera2021optimal,hu2020deep,hure2020deep,kohler2010pricing,lapeyre2021neural,reppen2022deep,reppen2022neural}. Before the introduction of DL, conventional numerical methods for BSDE or forward-backward stochastic differential equations (FBSDEs) include  \cite{bally2005quantization,bender2007forward,bender2008time,bouchard2004discrete,crisan2012solving,gobet2005regression,guo2015monotone,zhang2004numerical,zhao2010stable}, among others.

Despite the success of the developed machine learning methods, only limited work has been found on the theoretical support for the algorithms. To name a few, Han and Long \cite{han2020convergence} proved the convergence of the deep BSDE algorithm proposed in \cite{han2017deep,han2018solving}. Hur\'{e}, Pham and Warin \cite{hure2020deep} provided the convergence analysis for the deep backward dynamic programming method (DBDP), another DL algorithm for solving high-dimensional nonlinear PDEs and BSDEs. Han, Hu and Long \cite{han2020convergence1} proved the convergence of deep fictitious play (DFP) for the computing Nash equilibrium of the N-player asymmetric stochastic differential games.


In this paper, we study the convergence of the algorithm proposed by \cite{wang2018deep}. The main difficulty of the proof is that the backward numerical process $Y^{\pi}$ is not be adapted to the filtration, {which is the natural filtration of the Brownian motion; see Section \ref{sec:method} for details}. Therefore, some existing estimates of BSDE (e.g., in \cite{bender2008time,han2020convergence}) can not be directly applied. Nevertheless, we are able to provide {\it a posteriori} error estimation of the solution and show that the error of the numerical solution can be bounded by the training loss. Then, we show that the training loss can be sufficiently small subject to sufficient smoothness of the corresponding PDE and universal approximations. 

The rest of this paper is organized as follows. In Section~\ref{sec:method}, we review the backward deep BSDE method developed by Wang \emph{et al.} \cite{wang2018deep}. Section~\ref{sec:analysis} introduces basic assumptions of the BSDEs and provides the convergence result of the deep BSDE method. We then make the connections between the algorithm and optimal stopping problems through the example of Bermudan option pricing in the financial market in Section~\ref{sec:derivative}. Numerical examples will be given in Section~\ref{sec:numerics} and we conclude in Section~\ref{sec:conclusing}.

\medskip
\noindent {{\bf Related work}: Several machine learning methods have been proposed for solving BSDEs and optimal stopping problems. For example, \cite{becker2019deep,becker2020pricing} solve optimal stopping problems by parameterizing the stopping rules, while \cite{hu2020deep} parameterizes the stop/continuation regions. Others, such as \cite{reppen2022deep,reppen2022neural}, parameterize the optimal stopping boundary, \cite{kohler2010pricing,lapeyre2021neural} approximate the continuation value using neural networks, and \cite{hure2020deep} solves the corresponding reflected BSDE. Finally, \cite{bayraktar2022deep} focuses on path-dependent American options and solves the corresponding path-dependent FBSDE.

Recently, \cite{herrera2021optimal} proposed a novel approach to approximating the continuation value using randomized neural networks with convergence guarantees, where hidden layers are generated randomly and only the last layer is trained. In Section~\ref{sec:numerics}, we use their numerical results as benchmarks. Another related work with convergence analysis is \cite{andersson2022convergence}, which uses a loss function similar to \eqref{def:BdeepBSDE-goal} but replaces the simulation scheme \eqref{def:BdeepBSDE-Yt} with a more costly scheme involving backward and forward simulations. There are several differences between our work and theirs: (1) the FBSDE analyzed in \cite{andersson2022convergence} has a driver that does not depend on $Y$, while our FBSDE does, and this feature is critical for optimal stopping problems; (2) their proof relies on assumptions on the Markov maps and regularity of the optimal map, while we use standard Lipschitz and polynomial growth conditions on the coefficients and regularity of the corresponding PDE solution; (3) their proof relies on strong convergence results and bistability for stochastic multistep methods, while we use error estimates from \cite{zhang2017backward}.


\section{The Backward Deep BSDE Method}\label{sec:method}
Let $\left(\Omega, \cF, \{\cF_t\}_{0\leq t\leq T}, \hP\right)$ be a filtered probability space supporting a $d$-dimensional Brownian motion $W$ and $\cF=\left\{\cF_t\right\}_{0\leq t\leq T}$ is the natural filtration of $W$, augmented by all $\hP$-null sets. Consider the following generic BSDE on $\left[0,T\right]$:
\begin{equation}\label{def:BSDE}
\begin{dcases}
X_t=x+\int_0^t b(s,X_s)\ud s+\int_0^t \sigma(s,X_s)\ud W_s,\\
Y_t=g(X_T)+\int_t^T f(s,X_s,Y_s,Z_s)\ud s-\int_t^T Z_s\ud W_s,
\end{dcases}
\end{equation}
where $W_t\in \hR^d $, $X_t \in \hR^{d_1}$, $Y_t \in \hR^{d_2}$ and $Z_t \in \hR^{d_2\times d}$. For notation simplicity, we will carry out the discussions and proofs only for $d_2=1$, and the extension to multidimensional $Y$ is straightforward. More conditions on $b, \sigma, g, f$ will be specified in Section~\ref{sec:analysis}. We say the process $(X_t, Y_t, Z_t)$ is a solution to \eqref{def:BSDE} if it is $\cF_t$-adapted and satisfies proper integrability conditions (cf. \cite{zhang2017backward}).

Solving the BSDE numerically is not a easy problem especially when $d_1, d_2, d$ are large. In \cite{wang2018deep}, they propose the following backward deep BSDE scheme. Let $\pi$ be a partition of size $n$ on $[0, T]$: $0 = t_0 < t_1 < \ldots < t_n = T$, $t_i = ih$, $h = T/n$, the backward deep BSDE algorithm solve the minimization problem:
\begin{align}\label{def:BdeepBSDE-goal}
    & \inf_{\{\phi_i \in \cN_i\}_{i=0}^{n-1}} \Var\left[Y_0^\pi\right] \\
    & s.t. \quad X_{t_{i+1}}^{\pi}=X_{t_i}^{\pi}+b(t_i,X_{t_i}^{\pi})h+\sigma(t_i,X_{t_i}^{\pi})\Delta W_i, \quad X_0^{\pi}=x, \nonumber\\
    & \hspace{28pt} Y_{t_i}^{\pi} = Y^{\pi}_{t_{i+1}} + f(t_i,X_{t_i}^{\pi},Y_{t_{i+1}}^{\pi},Z_{t_i}^{\pi})h - Z_{t_i}^{\pi} \Delta W_i, \quad Y_T^{\pi}=g(X_T^{\pi}), \;
Z_{t_i}^{\pi}=\phi_i (X_{t_i}^\pi). \label{def:BdeepBSDE-Yt}
\end{align}
Here $X^\pi$ is firstly simulated forward given the initial value $x$ from 0 to $T$, then $Y^\pi$ is simulated backward from time $T$ to 0 given the terminal condition $g$ and the candidate of the parameterized adjoint process $Z_{t_i}^\pi = \phi_i(X_{t_i}^\pi)$. For brevity, we denote $\Delta W_i = W_{t_{i+1}} - W_{t_i}$ the Brownian increment, $\cN_i$  the hypothesis space related to deep neural networks, and use $0$ and $T$ as subscripts when time indices are $t_0$ or $t_n$.

In practice, the variance in \eqref{def:BdeepBSDE-goal} is approximated by Monte Carlo simulations, and the parameters in $\phi_i$ are optimized by stochastic gradient descent type algorithms. Intuitively, for deterministic initial value $x$, by the adaptivity to $\cF_t$,  $Y_0$ should be deterministic and thus has zero variance. The smaller the variance, the closer $Y_0^\pi$ to deterministic. This is indeed the rationality of choosing \eqref{def:BdeepBSDE-goal} as the loss function in training. This choice will be further justified in Section~\ref{sec:analysis}, where we show that, under proper assumptions, the error between $(Y^\pi, Z^\pi)$ in \eqref{def:BdeepBSDE-Yt} and the truth $(Y, Z)$ in \eqref{def:BSDE} can be bounded by the time discrete $h$ and $\Var[Y_0^\pi]$, and $\Var[Y_0^\pi]$ can be sufficiently small. 

\begin{rem}\label{rem:advantage}
The deep BSDE algorithm introduced in \cite{han2017deep,han2018solving}, and theoretically justified in \cite{han2020convergence}, is another very powerful deep learning algorithm in solving high-dimensional BSDEs or coupled FBSDEs. Compared to \eqref{def:BdeepBSDE-Yt}, the deep BSDE method simulates $Y^\pi$ in a forward manner after parameterizing $Y_0^\pi$ and $Z_{t_i}^\pi$ using neural networks, and minimizes $E|g(X_T^\pi) - Y_T^\pi|^2$ to obtain the optimal network parameters.

The advantage of \eqref{def:BdeepBSDE-goal}--\eqref{def:BdeepBSDE-Yt} is the capability of dealing with optimal stopping problems, due to its backward nature in simulating $Y$. This will be further explained in Section~\ref{sec:derivative}.
\end{rem}

\section{Convergence Analysis}\label{sec:analysis}
This section dedicates to the convergence analysis for the backward deep BSDE method reviewed in \eqref{def:BdeepBSDE-goal}--\eqref{def:BdeepBSDE-Yt}. More specifically, we shall first show that $\Var[Y_0^\pi]$ is a valid choice for loss functions by bounding the numerical errors using $\Var[Y_0^\pi]$. Thus, the smaller the variance is, the more accurate the numerical solution is. Secondly, we shall provide an upper bound for the minimized loss function, which can be sufficiently small subject to the universal approximation theorem.

We first introduce the standing assumptions and review some useful results in \cite{zhang2004numerical}.

\begin{assum}\label{assump:l2regularity}
Let $b, \sigma, f, g$ be deterministic functions:
$$
    \left(b, \sigma\right): \left[0,T\right] \times \hR^{d_1} \to \hR^{d_1} \times \hR^{d_1\times d}, \quad f: \left[0,T\right] \times \hR^{d_1} \times \hR^{d_2} \times \hR^{d_2\times d} \to \hR^{d_2}, \quad g: \hR^{d_1} \to \hR^{d_2}, \nonumber
$$
such that:
\begin{enumerate}

\item  $b\left(\cdot,0\right), \sigma \left(\cdot,0\right), f\left(\cdot,0,0,0\right)$ and $g\left(0\right)$ are bounded;

\item $b,\sigma,f,g$ are uniformly Lipchitz continuous in $\left(x,y,z\right)$ and $\half$-H\"{o}lder continuous in $t$,  with Lipchitz constant $K$.

\item $d_1 = d$ and $\sigma \sigma\transpose \geq \delta I_{d_1}$ for some constant $\delta >0$.
\end{enumerate}

\end{assum}

\begin{assum}\label{assump:posteriorbdd}
Assume further that $f$ satisfies the following Lipchitz condition with respect to $y$ and $z$:
$$
    \left|f(t,x,y_1,z_1)-f(t,x,y_2,z_2)\right|^2\leq K_y\left|y_1-y_2\right|^2+K_z\left|z_1-z_2\right|^2 \nonumber
$$
where $K_y$ is sufficiently small $($at least $6T^2 K_y<1$$)$ and $K_z<{\frac{1}{4T}}$.
\end{assum}

\begin{assum}
\label{assump:FKformula}
Assume the following system of PDE:
$$
    \partial_t u^{i}+ b \cdot \nabla_x u^i + \half \text{Tr}\Big( \text{Hess}_xu^{i}(\sigma\sigma\transpose)\Big)+f^i\left(t,x,u, \partial_x u \sigma\right)=0, \; i=1,\cdots,d_2; \; u\left(T,x\right)=g\left(x\right). \nonumber
$$
has a classical solution $u  = \left[u^1, u^2, \ldots, u^{d_2}\right] \in C^{1,2}\big(\left[0,T\right]\times \hR^{d_1},\hR^{d_2}\big)$. 
\end{assum} 

\begin{assum}
\label{assump:minimizeVar}
Assume further that $f$ satisfies the following Lipchitz condition with respect to $y$:
$$
\left|f\left(t,x,y_1,z\right)-f\left(t,x,y_2,z\right)\right|^2\leq K_y\left|y_1-y_2\right|^2, \nonumber
$$
where $6T^2 K_y<1$.
\end{assum}

\begin{remark}
\begin{enumerate}
    \item  Under Assumption \ref{assump:l2regularity} and Assumption \ref{assump:FKformula}, the nonlinear Feynman-Kac formula holds and implies that $Y_t=u(t,X_t)$ and $Z_t=\nabla_x u\sigma(t,X_t)$, (See Theorem 5.1.4 in \cite{zhang2017backward}). Such representation of $Z$ will be used in Theorem \ref{easy case for upper bound}.
    
    \item Assumption \ref{assump:posteriorbdd} and Assumption \ref{assump:minimizeVar} are very similar, {and will be used correspondingly}, in Theorem \ref{thm:posteriorbdd} and Theorem \ref{easy case for upper bound}.
\end{enumerate}

\end{remark}

{Recall the partition $\pi$ on $\left[0,T\right]$ with  $0 = t_0 < t_1 < \ldots < t_n = T$,  $t_i = ih = iT/n$. For a given partition, let us define 
$$\pi(t) = t_i, \quad \bar \pi(t) = t_{i+1}, \quad \forall t \in [t_i, t_{i+1}).$$
We now recall the following $L^2$ regularity result. In the sequel, we will use $C$ as a generic positive constant, which may vary from line to line.}

\begin{prop}[{\cite[Theorem 3.1, Lemma 3.2]{zhang2004numerical}}]\label{prop:pathregularity}
Under Assumption~\ref{assump:l2regularity}, let $\left(X_t, Y_t, Z_t\right)$ be the solution to the (decoupled) FBSDE \eqref{def:BSDE}. The following estimate holds:
\begin{equation}\label{eq:pathregularity}
\sup_{ t \in [0,T]} \dbE\left[\left|X_t-X_{\pi(t)}\right|^2 + \left|Y_t-Y_{\pi(t)}\right|^2\right]
+ \dbE\int_{0}^{T}\left|Z_t-Z_{\pi(t)}\right|^2\ud t \leq Ch,
\end{equation}
where $C$ may depend on $T, K$ but free of the partition parameter $h$.
\end{prop}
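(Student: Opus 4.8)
The plan is to treat the three contributions separately, handling the forward process first, then $Y$, and reserving the $Z$-regularity as the main difficulty. Throughout I use only the standing hypotheses in Assumption~\ref{assump:l2regularity}.

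First, for the forward diffusion $X$, since $b,\sigma$ are uniformly Lipschitz in $x$ and $b(\cdot,0),\sigma(\cdot,0)$ are bounded, they have linear growth, so the standard SDE moment estimate (Burkholder--Davis--Gundy together with Gronwall's inequality) yields $\dbE[\sup_{0\le t\le T}|X_t|^2] < \infty$. Applying the same estimates to the increment $X_t - X_{\pi(t)} = \int_{\pi(t)}^t b(s,X_s)\,ds + \int_{\pi(t)}^t \sigma(s,X_s)\,dW_s$ and using the linear-growth bound on the coefficients gives the $L^2$ modulus of continuity $\dbE|X_t - X_{\pi(t)}|^2 \le C(t-\pi(t)) \le Ch$ uniformly in $t$, which controls the first term.

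Second, for $Y$, I would write, for $t\in[t_i,t_{i+1})$, $Y_t - Y_{\pi(t)} = -\int_{\pi(t)}^t f(s,X_s,Y_s,Z_s)\,ds + \int_{\pi(t)}^t Z_s\,dW_s$, then bound $\dbE|Y_t-Y_{\pi(t)}|^2$ by $2\dbE|\int_{\pi(t)}^t f\,ds|^2 + 2\dbE\int_{\pi(t)}^t |Z_s|^2\,ds$ using Cauchy--Schwarz and the It\^o isometry. The drift contribution is controlled by the linear growth of $f$ together with the uniform bounds $\sup_t \dbE[|X_t|^2 + |Y_t|^2] < \infty$ and $\dbE\int_0^T|Z_s|^2\,ds < \infty$, giving an $O(h)$ estimate. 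The stochastic-integral contribution again reduces to controlling $\dbE\int_{\pi(t)}^t|Z_s|^2\,ds$ uniformly in $t$ at rate $h$, which is not automatic for a generic $Z\in L^2$ and therefore hinges on the regularity of $Z$.

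Third, the genuinely hard step is the $Z$-estimate, which also closes the loop in the $Y$ bound. Since the Feynman--Kac representation $Z_t = \nabla_x u\,\sigma(t,X_t)$ of Assumption~\ref{assump:FKformula} is not among the hypotheses here, I would instead invoke the Malliavin-calculus route: under the Lipschitz conditions and the uniform ellipticity $\sigma\sigma\transpose \ge \delta I_{d_1}$, the solution $(X,Y,Z)$ is Malliavin differentiable, $Z$ admits a continuous version $Z_t = D_t Y_t$, and $(D_\theta X, D_\theta Y, D_\theta Z)$ solves a linear FBSDE with coefficients bounded by $K$; estimating that linear system yields an $L^2$ modulus of continuity for $Z$. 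The clean way to obtain the $O(h)$ rate is to compare $Z_t$ not to the pointwise value but to the conditional average $\bar Z_{t_i} := h^{-1}\dbE[\int_{t_i}^{t_{i+1}} Z_s\,ds \mid \cF_{t_i}]$: because $\bar Z_{t_i}$ is the $L^2$-projection of $Z$ onto the $\cF_{t_i}$-measurable constants on $[t_i,t_{i+1}]$, one has $\dbE\int_{t_i}^{t_{i+1}}|Z_t - \bar Z_{t_i}|^2\,dt \le \dbE\int_{t_i}^{t_{i+1}}|Z_t - Z_{t_i}|^2\,dt$, and the path regularity of $Z$ bounds the right-hand side, summed over $i$, by $Ch$. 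Assembling this with the $X$ and $Y$ bounds gives \eqref{eq:pathregularity}. The main obstacle is thus the $Z$-regularity: it requires genuine stochastic analysis (Malliavin differentiability, or under added PDE smoothness an It\^o expansion of $\nabla_x u(\cdot,X_\cdot)$), in contrast to the $X$ and $Y$ terms, which follow from elementary $L^2$ increment estimates; this is precisely the content borrowed from \cite{zhang2004numerical,zhang2017backward}.
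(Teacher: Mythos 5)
The paper does not actually prove this proposition: it is imported wholesale from \cite[Theorem 3.1, Lemma 3.2]{zhang2004numerical}, so the only meaningful comparison is with that source. Your overall architecture --- elementary increment estimates for $X$, reduction of the $Y$-increment to controlling $\dbE\int_{\pi(t)}^{t}|Z_s|^2\ud s$, and a Malliavin/variational argument for the $Z$-regularity --- is essentially the architecture of Zhang's proof. There, the key input is the representation $Z_t=\nabla Y_t(\nabla X_t)^{-1}\sigma(t,X_t)$ (proved via Malliavin calculus in the Ma--Zhang representation theorem), which yields both $\sup_t\dbE|Z_t|^2\le C$ (this uniform $L^2$ bound, rather than any modulus of continuity, is what your $Y$-step actually needs) and the $Z$-regularity; note that ellipticity is not what drives Malliavin differentiability or this result --- under Lipschitz $g$ the estimate holds for degenerate $\sigma$ as well --- though invoking it is harmless since the paper assumes it.

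Two corrections are needed in your $Z$-step. First, the projection inequality points the wrong way for the statement as written: the proposition concerns the pointwise values $Z_{\pi(t)}$, so the estimate you must produce is $\dbE\int_0^T|Z_t-Z_{\pi(t)}|^2\ud t\le Ch$. Your inequality $\dbE\int_{t_i}^{t_{i+1}}|Z_t-\bar Z_{t_i}|^2\ud t\le\dbE\int_{t_i}^{t_{i+1}}|Z_t-Z_{t_i}|^2\ud t$ deduces the \emph{weaker} conditional-average bound \emph{from} the pointwise one, so it cannot be what is ``assembled'' into \eqref{eq:pathregularity}; it should simply be deleted (it would be the relevant step only if the proposition were stated with $\bar Z_{t_i}$, as some formulations of Zhang's theorem are, but this paper's is not). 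Your argument survives only because the pointwise bound is asserted in the same sentence; as literally written, the final assembly proves the wrong statement. Second, that pointwise bound should not be claimed via a uniform modulus $\dbE|Z_t-Z_s|^2\le C|t-s|$, which is not available under Lipschitz-only data: the linear equation solved by the derivative processes gives $\dbE|\nabla Y_t-\nabla Y_{t_i}|^2\le C|t-t_i|+C\,\dbE\int_{t_i}^{t}|\nabla Z_r|^2\ud r$, and the last term is $O(h)$ only after summing over partition intervals, since $\sum_i\int_{t_i}^{t_{i+1}}\dbE\int_{t_i}^{t}|\nabla Z_r|^2\ud r\,\ud t\le h\,\dbE\int_0^T|\nabla Z_r|^2\ud r$. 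This integrated (summed) form is exactly what \eqref{eq:pathregularity} requires, so your conclusion stands, but the phrase ``$L^2$ modulus of continuity for $Z$'' should be replaced by this summed estimate, which is precisely the content of Zhang's Theorem 3.1.
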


{Our} first theorem aims to justify the validity of using $\Var\left(Y_0^\pi\right)$ as training loss. To this end, we consider $\left\{X_{t_i}^\pi, Y_{t_i}^\pi\right\}_{i=0}^n$ satisfying
\begin{equation}\label{eq:BSDEdiscrete}
    \begin{dcases}
 X_{t_{i+1}}^{\pi}=X_{t_i}^{\pi}+b(t_i,X_{t_i}^{\pi})h+\sigma(t_i,X_{t_i}^{\pi})\Delta W_i, \quad X_0^{\pi}=x, \\
  Y_{t_i}^{\pi} = Y^{\pi}_{t_{i+1}} + f(t_i,X_{t_i}^{\pi},Y_{t_{i+1}}^{\pi},Z_{t_i}^{\pi})h - Z_{t_i}^{\pi} \Delta W_i, \quad Y_T^{\pi}=g(X_T^{\pi}), \;
    \end{dcases}
\end{equation}
where $Z_{t_i}^\pi$ is arbitrary but $\cF_{t_i}$-measurable. For implementation, $Z_{t_i}^\pi$ will be chosen as a function of $X_{t_i}^\pi$ parameterized by neural networks. We now present a posterior estimation of this simulation error.

\begin{thm}\label{thm:posteriorbdd}
Under Assumptions~\ref{assump:l2regularity} and \ref{assump:posteriorbdd}, we have
$$
\sup_{t \in [0,T]} \hE|Y_t - Y_{\pi\left(t\right)}^\pi|^2+\int_0^T \dbE|Z_t-{Z}_{\pi\left(t\right)}^{\pi}|^2\ud t \leq C\Big[h+\Var\left(Y_0^{\pi}\right)\Big], \nonumber
$$
where $\left\{X_{t_i}^\pi, Y_{t_i}^\pi, Z_{t_i}^\pi\right\}_{i=0}^n$ follows \eqref{eq:BSDEdiscrete}, and $\left(Y_t, Z_t\right)$ satisfies \eqref{def:BSDE}.
\end{thm}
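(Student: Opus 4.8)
The plan is to prove everything first on the time grid and then recover the $\sup_{t}$ and integral statements via the $L^2$-regularity estimate of Proposition~\ref{prop:pathregularity}; since $Y_t-Y^\pi_{\pi(t)}=(Y_t-Y_{\pi(t)})+(Y_{\pi(t)}-Y^\pi_{\pi(t)})$ and the first difference is $O(\sqrt h)$ in $L^2$ by \eqref{eq:pathregularity}, it suffices to bound $A:=\sup_i\dbE|Y_{t_i}-Y^\pi_{t_i}|^2$ and $B:=\int_0^T\dbE|Z_t-Z^\pi_{\pi(t)}|^2\ud t$. Subtracting the scheme \eqref{eq:BSDEdiscrete} from the exact BSDE \eqref{def:BSDE} on $[t_i,t_{i+1}]$ gives the one-step error recursion $\delta Y_i=\delta Y_{i+1}+\Delta f_i-\Delta M_i$, where $\delta Y_i:=Y_{t_i}-Y^\pi_{t_i}$, $\Delta f_i:=\int_{t_i}^{t_{i+1}}f(s,X_s,Y_s,Z_s)\ud s-f(t_i,X^\pi_{t_i},Y^\pi_{t_{i+1}},Z^\pi_{t_i})h$, and $\Delta M_i:=\int_{t_i}^{t_{i+1}}(Z_s-Z^\pi_{t_i})\ud W_s$. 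Since $Z^\pi_{t_i}$ is $\cF_{t_i}$-measurable, the $\Delta M_i$ are genuine martingale increments, and summing over the grid gives $\sum_i\dbE|\Delta M_i|^2=B$ by the It\^o isometry; this is the identity linking the $Z$-error to the accumulated noise.

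The main obstacle, as noted in the introduction, is that $Y^\pi_{t_i}$ is only $\cF_T$-measurable, so one cannot condition the recursion on $\cF_{t_i}$ and annihilate the martingale terms in the classical way. To handle this I would split the numerical solution into its adapted part and a fluctuation: set $\widehat Y^\pi_{t_i}:=\dbE[Y^\pi_{t_i}\mid\cF_{t_i}]$, $e_i:=Y_{t_i}-\widehat Y^\pi_{t_i}$ and $\eta_i:=Y^\pi_{t_i}-\widehat Y^\pi_{t_i}$, so that $\delta Y_i=e_i-\eta_i$ with $\dbE[\eta_i\mid\cF_{t_i}]=0$, giving the orthogonal decomposition $\dbE|\delta Y_i|^2=\dbE|e_i|^2+\dbE|\eta_i|^2$. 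The crucial point is that $\cF_0$ is trivial, so $\dbE|\eta_0|^2=\Var(Y^\pi_0)$: this is exactly how the training loss enters the estimate. For the adapted part, conditioning the recursion on $\cF_{t_i}$ yields the clean backward recursion $e_i=\dbE[e_{i+1}+\Delta f_i\mid\cF_{t_i}]$ with terminal value $e_n=g(X_T)-g(X^\pi_T)$ satisfying $\dbE|e_n|^2\le Ch$; using Young's inequality, the Lipschitz bound on $f$, the strong error $\dbE|X_{t_i}-X^\pi_{t_i}|^2\le Ch$ of the forward Euler scheme, and a discrete Gronwall argument, $\sup_i\dbE|e_i|^2$ is bounded by $Ch$ plus driver-error contributions that couple back to $\eta$ and to $B$.

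For the fluctuation I would iterate $\eta_i=\eta_{i+1}+R_i$, where $R_i:=(\widehat Y^\pi_{t_{i+1}}-\dbE[\widehat Y^\pi_{t_{i+1}}\mid\cF_{t_i}])+(f^\pi_i-\dbE[f^\pi_i\mid\cF_{t_i}])h-Z^\pi_{t_i}\Delta W_i$ and $f^\pi_i:=f(t_i,X^\pi_{t_i},Y^\pi_{t_{i+1}},Z^\pi_{t_i})$, obtaining $\eta_i=\eta_0-\sum_{j<i}R_j$. By the martingale representation theorem the first and third terms of $R_j$ combine into a single stochastic integral $\int_{t_j}^{t_{j+1}}(\zeta_s-Z^\pi_{t_j})\ud W_s$; these are mutually orthogonal martingale differences, and their total second moment $\int_0^T\dbE|\zeta_s-Z^\pi_{\pi(s)}|^2\ud s$ is controlled by $B$ together with the $O(h)$ error between $\zeta$ and the true $Z$, while the driver-fluctuation term contributes only $O(h)$ because each carries an extra factor $h$ and there are $T/h$ of them. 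The one genuinely non-adapted piece is the $y$-dependence of the driver, $f^\pi_i-f(t_i,X^\pi_{t_i},\widehat Y^\pi_{t_{i+1}},Z^\pi_{t_i})$, which Assumption~\ref{assump:posteriorbdd} bounds by $\sqrt{K_y}\,|\eta_{i+1}|$; after Cauchy--Schwarz this produces a term of size $CT^2K_y\sup_j\dbE|\eta_j|^2$, and since $6T^2K_y<1$ it can be absorbed into the left-hand side. This yields $\sup_i\dbE|\eta_i|^2\le C\big[\Var(Y^\pi_0)+h+B\big]$.

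Finally, I would assemble the three estimates. Isolating $\Delta M$ in the summed recursion expresses $B=\sum_i\dbE|\Delta M_i|^2$ in terms of $\dbE|\delta Y_0|^2$, the terminal error, and the accumulated driver error $\dbE\big|\sum_i\Delta f_i\big|^2$, where the smallness $K_z<\tfrac{1}{4T}$ is used to absorb the $Z$-contribution of the driver back into $B$. All the remaining mutual couplings between the bounds on $e_i$, $\eta_i$, and $B$ carry either the small factors $K_y$, $K_z$ or an explicit $h$, together with $\int_0^T\dbE|Y_s-Y^\pi_{\bar\pi(s)}|^2\ud s\le Ch+Ch\sum_j\dbE|\delta Y_{j+1}|^2$, so a final discrete Gronwall inequality closes the system and gives $A+B\le C(h+\Var(Y^\pi_0))$. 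Proposition~\ref{prop:pathregularity} then upgrades the grid-point bound on $A$ to the stated $\sup_{t\in[0,T]}$ estimate and supplies the $Ch$ needed to pass from $\int_0^T\dbE|Z_t-Z^\pi_{\pi(t)}|^2\ud t$. The hardest step is the control of $\eta_i$: this is precisely where the non-adaptedness of $Y^\pi$ destroys the usual martingale orthogonality, and where the smallness of $K_y$ and the appearance of $\Var(Y^\pi_0)$ are indispensable.
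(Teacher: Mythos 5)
Your setup is sound: the recursion $\delta Y_i=\delta Y_{i+1}+\Delta f_i-\Delta M_i$, the It\^o isometry $\sum_i\dbE|\Delta M_i|^2=B$ (valid because $Z^\pi_{t_i}$ \emph{is} adapted), the orthogonal splitting $\dbE|\delta Y_i|^2=\dbE|e_i|^2+\dbE|\eta_i|^2$, the identity $\dbE|\eta_0|^2=\Var(Y_0^\pi)$, and the conditional recursion $e_i=\dbE[e_{i+1}+\Delta f_i\mid\cF_{t_i}]$ are all correct. The genuine gap is in the control of the fluctuation $\eta_i$, which you yourself identify as the hardest step. First, the increments $R_j$ are \emph{not} mutually orthogonal martingale differences: the driver part $(f^\pi_j-\dbE[f^\pi_j\mid\cF_{t_j}])h$ involves $f^\pi_j=f(t_j,X^\pi_{t_j},Y^\pi_{t_{j+1}},Z^\pi_{t_j})$, and $Y^\pi_{t_{j+1}}$ is only $\cF_T$-measurable, so $R_j$ is not $\cF_{t_{j+1}}$-measurable; hence for $j<k$ the computation $\dbE[R_jR_k]=\dbE[R_j\,\dbE[R_k\mid\cF_{t_k}]]=0$ is not available, and $\dbE|\sum_{j}R_j|^2\neq\sum_j\dbE|R_j|^2$ in general. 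The non-adaptedness that breaks the classical argument reappears inside your driver fluctuation; your decomposition does not escape it. Second, the claim that the martingale-representation density $\zeta$ of $\widehat Y^\pi$ is within $O(h)$ of the true $Z$, so that $\int_0^T\dbE|\zeta_s-Z^\pi_{\pi(s)}|^2\ud s$ is controlled by $B+O(h)$, is unsupported and essentially circular: $\zeta$ is defined implicitly from the numerical solution, and bounding $\dbE\int_0^T|\zeta_s-Z_s|^2\ud s$ is as hard as the theorem itself. (A symptom of the missing orthogonality: if it did hold, you would not need $\zeta$ at all, since $\eta_n=0$ because $Y^\pi_T=g(X^\pi_T)$ is $\cF_T$-measurable, whence $\eta_i=\sum_{j\ge i}R_j$ and orthogonality would immediately give $\sup_i\dbE|\eta_i|^2\le\dbE|\eta_0|^2=\Var(Y_0^\pi)$.)

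For comparison, the paper's proof never decomposes $Y^\pi$ into adapted and fluctuating parts, and this is what lets it avoid both issues. Step 1 writes $Y_{t_i}-Y^\pi_{t_i}$ as the terminal error plus driver error plus $\int_{t_i}^T(Z_s-Z^\pi_{\pi(s)})\ud W_s$ over $[t_i,T]$ and uses crude $3(a^2+b^2+c^2)$ bounds together with Proposition~\ref{prop:pathregularity} and $6T^2K_y<1$ to get $\max_i\dbE|Y_{t_i}-Y^\pi_{t_i}|^2\le Ch+\frac{3(1+TK_z)}{1-6T^2K_y}B$; only the isometry for the $Z$-integral is needed, never any conditioning on $Y^\pi$. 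Step 2 is the key trick: since $Y_0$ is deterministic, $\Var(Y_0^\pi)=\Var(Y_0^\pi-Y_0)=\dbE|a+b|^2$ with $a=\int_0^T(Z_s-Z^\pi_{\pi(s)})\ud W_s$ and $b$ the \emph{centered} terminal-plus-driver error, and then $\dbE|a+b|^2\ge(1-\epsilon)\dbE|a|^2-\frac{1}{\epsilon}\dbE|b|^2$. Crucially, $\delta Y_0$ never appears on the right-hand side, so the only feedback of $B$ into the bound goes through the driver Lipschitz constants, and optimizing $\epsilon$ turns the closing condition into $2\sqrt{(1+\delta)TK_z+O(K_y)}<1$ --- exactly what $K_z<\frac{1}{4T}$ and small $K_y$ deliver. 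Your assembly instead bounds $B$ by $\dbE|\delta Y_0|^2$ (a raw second moment, not $\Var(\delta Y_0)$) plus driver terms; this places the bias $|\dbE Y_0^\pi-Y_0|^2=|e_0|^2$, which itself feeds back into $B$, on the right with an $O(1)$ coefficient, making the loop constant borderline (roughly $4TK_z$ plus corrections that depend on the unproven $\eta$ bound). So even if the bookkeeping were completed, your whole scheme stands or falls with the $\eta$ estimate, and that is precisely the step that is broken.
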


\begin{proof}

By definitions of $Y_{t_i}$ and $Y_{t_i}^\pi$, the Lipschitz property of $f$ and Cauchy-Schwartz inequality, we have
\begin{align}
&\quad \dbE\left|Y_{t_i}-Y_{t_i}^{\pi}\right|^2 \nonumber\\
&\leq 3\Big(\dbE\left|g\left(X_T\right)-g\left(X_T^{\pi}\right)\right|^2+\dbE|\int_{t_i}^T f\left(s,X_s,Y_s,Z_s\right)-f(\pi(s),X^{\pi}_{\pi\left(s\right)},Y^{\pi}_{\bar \pi\left(s\right)},Z^{\pi}_{\pi \left(s\right)})\ud s|^2 \nonumber\\
&\quad +\dbE|\int_{t_i}^T Z_s-Z_{\pi\left(s\right)}^{\pi} \ud W_s|^2\Big)\nonumber\\
&\leq  3\Big(K^2h+T\dbE\int_0^T|f(s,X_s,Y_s,Z_s)-f(\pi(s),X^{\pi}_{\pi(s)},Y^{\pi}_{\bar \pi\left(s\right)},Z^{\pi}_{\pi \left(s\right)})|^2\ud s \nonumber\\
&\quad +\dbE\int_0^T |Z_s-Z_{\pi\left(s\right)}^{\pi}|^2 \ud s\Big)\nonumber\\
&\leq  3\Big(Ch+T\dbE \int_0^T 2K^2h+ K_y|Y_s - Y_{\bar\pi\left(s\right)}^\pi|^2+K_z|Z_s-Z_{\pi\left(s\right)}^{\pi}|^2 \ud s+\dbE\int_0^T |Z_s-Z_{\pi\left(s\right)}^{\pi}|^2  \ud s\Big) \nonumber\\
&\leq  Ch+3\left(1+TK_z\right)\dbE\int_0^T |Z_s-Z_{\pi\left(s\right)}^{\pi}|^2 \ud s + 6TK_y \dbE\int_0^T |Y_s - Y_{\bar \pi\left(s\right)}|^2 + |Y_{\bar \pi\left(s\right)} - Y_{\bar\pi\left(s\right)}^\pi|^2 \ud s. \nonumber
\end{align}
By Proposition \ref{prop:pathregularity}, the first term in the second integral is bounded by $Ch$, which gives 
\begin{equation}
\dbE|Y_{t_i}-Y_{t_i}^{\pi}|^2\leq Ch+3\left(1+TK_z\right)\dbE\int_0^T |Z_s-Z_{\pi\left(s\right)}^{\pi}|^2 \ud s + 6TK_y \dbE\int_0^T \max_{0\leq j\leq n-1}|Y_{t_j} - Y_{t_j}^\pi|^2 \ud s.  \nonumber
\end{equation}
Maximizing over all $i$ yields
\begin{align}
\max_{0\leq i\leq n-1}\dbE\left|Y_{t_i}-Y_{t_i}^{\pi}\right|^2&\leq Ch+3\left(1+TK_z\right)\dbE\int_0^T |Z_s-Z_{\pi\left(s\right)}^{\pi}|^2 \ud s\\ \nonumber
& \quad +6T K_y \int_0^T \max_{0\leq i\leq n-1}|Y_{t_j} - Y_{t_j}^\pi|^2 \ud s.    \nonumber 
\end{align}
By Assumption \ref{assump:posteriorbdd}, one has $6T^2K_y <1$, which implies
\begin{equation}
\label{y respect to z}
\max_{0\leq i\leq n-1}\dbE\left|Y_{t_i}-Y_{t_i}^{\pi}\right|^2\leq Ch+\frac{3\left(1+TK_z\right)}{1-6T^2K_y}\dbE\int_0^T |Z_s-Z_{\pi\left(s\right)}^{\pi}|^2 \ud s.
\end{equation}
Now it suffices to show 
$$
\dbE \int_0^T|Z_s-Z_{\pi\left(s\right)}^{\pi}|^2 \ud s \leq C\Big[h+\Var(Y_0^\pi)\Big]. \nonumber
$$
Since $Y_0 \in \cF_0$ is deterministic, it is clear that $\Var\left(Y_0^\pi\right) =\Var\left(Y_0^\pi-Y_0\right)$. we deduce
\begin{align}
\Var\left(Y_0^\pi\right) &= \dbE\Bigg\{g\left(X_T^\pi\right)-g\left(X_T\right)+\int_{0}^T f(\pi(s),X^{\pi}_{\pi(s)},Y^{\pi}_{\bar \pi\left(s\right)},Z^{\pi}_{\pi \left(s\right)})-f\left(s,X_s,Y_s,Z_s\right)\ud s \nonumber\\
&\quad+\int_0^T (Z_s-Z^{\pi}_{\pi \left(s\right)})\ud W_s-\dbE\Big[g\left(X_T^\pi\right)-g\left(X_T\right) \nonumber\\
&\quad  +\int_{0}^T f(\pi(s),X^{\pi}_{\pi\left(s\right)},Y^{\pi}_{\bar \pi\left(s\right)},Z^{\pi}_{\pi \left(s\right)})-f\left(s,X_s,Y_s,Z_s\right)\ud s\Big]\Bigg\}^2. \nonumber
\end{align}
Using the inequality $a^2+2ab \leq\left(a+b\right)^2 $, one has
\begin{align}
&\quad\Var(Y_0^\pi)\nonumber\\
&\geq \dbE\int_0^T|Z_s-Z^{\pi}_{\pi \left(s\right)}|^2\ud s
+2\dbE\Bigg\{\bigg[g\left(X_T^\pi\right)-g\left(X_T\right) +\int_{0}^T f(\pi(s),X^{\pi}_{\pi\left(s\right)},Y^{\pi}_{\bar \pi\left(s\right)},Z^{\pi}_{\pi \left(s\right)})\nonumber\\
&\quad -f(s,X_s,Y_s,Z_s)\ud s-\dbE\Big(g\left(X_T^\pi\right)-g\left(X_T\right)+\int_{0}^T f(\pi(s),X^{\pi}_{\pi\left(s\right)},Y^{\pi}_{\bar \pi\left(s\right)},Z^{\pi}_{\pi \left(s\right)})\nonumber\\
&\quad -f\left(s,X_s,Y_s,Z_s\right)\ud s\Big)\bigg]\int_0^T Z_s-Z^{\pi}_{\pi \left(s\right)}\ud W_s \Bigg\}. \nonumber
\end{align}
For positive arbitrary $\epsilon$, we have $2ab \geq -\epsilon a^2-{1\over \epsilon} b^2$. Together with $\Var\left(X\right)\leq \dbE X^2$, we get:
\begin{align}
 \Var\left(Y_0^\pi\right) &\geq  \left(1-\epsilon\right)\dbE \int_0^T|Z_s-Z^{\pi}_{\pi \left(s\right)}|^2 \ud s-{1\over \epsilon} \dbE\Big[g\left(X_T^\pi\right)-g\left(X_T\right) \nonumber\\
& \quad  +\int_{0}^T f(\pi(s),X^{\pi}_{\pi\left(s\right)},Y^{\pi}_{\bar \pi\left(s\right)},Z^{\pi}_{\pi \left(s\right)})-f\left(s,X_s,Y_s,Z_s\right)\ud s\Big]^2. \nonumber
\end{align}
For positive arbitrary $\delta$, the use of  $\left(a+b\right)^2 \leq \left(1+{1\over\delta}\right)a^2+\left(1+\delta\right)b^2$ leads to
\begin{align}
\Var\left(Y_0^\pi\right)&\geq\left(1-\epsilon\right)\dbE \int_0^T|Z_s-Z^{\pi}_{\pi \left(s\right)}|^2\ud s-{1\over \epsilon}\Bigg\{(1+{1\over \delta}) \dbE\Big[g\left(X_T^\pi\right)-g\left(X_T\right)\Big]^2\nonumber\\
& \quad+\left(1+\delta\right)\dbE \Big[\int_0^T f(\pi(s),X^{\pi}_{\pi\left(s\right)},Y^{\pi}_{\bar \pi\left(s\right)},Z^{\pi}_{\pi \left(s\right)})-f(s,X_s,Y_s,Z_s)\ud s\Big]^2\Bigg\} \nonumber\\
&\geq \left(1-\epsilon\right)\dbE \int_0^T|Z_s-Z^{\pi}_{\pi \left(s\right)}|^2\ud s -{1\over \epsilon}\Bigg[(1+{1\over \delta})K^2h \nonumber\\
&\quad+\left(1+\delta\right)T\dbE \int_0^T |f(\pi(s),X^{\pi}_{\pi\left(s\right)},Y^{\pi}_{\bar \pi\left(s\right)},Z^{\pi}_{\pi \left(s\right)})-f\left(s,X_s,Y_s,Z_s\right)|^2\ud s \Bigg].\nonumber
\end{align}
The Lipschitz condition of $f$ leads to
\begin{align}
&\quad \Var\left(Y_0^\pi\right)\nonumber\\
&\geq  \left(1-\epsilon\right)\dbE \int_0^T|Z_s-Z^{\pi}_{\pi \left(s\right)}|^2\ud s-{1\over \epsilon}\Bigg[(1+{1\over \delta})Ch \nonumber\\
& \quad 
+\left(1+\delta\right)T\dbE\int_0^T K^2h+K^2h+K_y|Y_s-Y_{\bar \pi\left(s\right)}^\pi|^2+K_z|Z_s-Z^{\pi}_{\pi \left(s\right)}|^2\ud s \Bigg] \nonumber\\
&\geq \bigg[1-\epsilon-{\left(1+\delta\right)TK_z\over\epsilon}\bigg]\dbE \int_0^T|Z_s-Z^{\pi}_{\pi \left(s\right)}|^2\ud s - Ch\nonumber\\
& \quad -{2\left(1+\delta\right)\over \epsilon}T^2K_y\max_{0\leq i\leq n-1}\dbE\left|Y_{t_i}-Y_{t_i}^{\pi}\right|^2. \nonumber
\end{align}
Using \eqref{y respect to z} brings
$$
    \Var\left(Y_0^\pi\right)
\geq \left[1-\epsilon-{\left(1+\delta\right)TK_z\over\epsilon}-{2\left(1+\delta\right)\over \epsilon} T^2 K_y\frac{3\left(1+TK_z\right)}{1-6T^2K_y}\right]\dbE \int_0^T|Z_s-Z^{\pi}_{\pi \left(s\right)}|^2\ud s - Ch. \nonumber
$$
Choose $\epsilon=\sqrt{\left(1+\delta\right)TK_z+2\left(1+\delta\right)T^2 K_y\frac{3\left(1+TK_z\right)}{1-6T^2K_y}}$, since $K_z< {1\over 4T}$ and $K_y$ is sufficiently small, we may choose proper $\delta$ sufficiently small such that
$$
1-2\sqrt{\left(1+\delta\right)TK_z+2\left(1+\delta\right)T^2 K_y\frac{3\left(1+TK_z\right)}{1-6T^2K_y}}>0. \nonumber
$$
Therefore,
$$
    \max_{0\leq i\leq n-1} \dbE|{Y}_{t_i}- Y_{t_i}^\pi|^2+\int_0^T \dbE|Z_t-{Z}_{\pi(t)}^{\pi}|^2\ud t \leq C\Big[h+\Var\left(Y_0^{\pi}\right)\Big]. \nonumber
$$
Apply once again the path regularity result \eqref{eq:pathregularity}, we obtain the desired result. 
\end{proof}

\begin{rem}
 {The non-adaptness of $Y_{t_i}^{\pi}$ (cf. \eqref{eq:BSDEdiscrete}) makes some estimates in \cite{zhang2004numerical} no longer valid. For example, in the proof of convergence of the numerical solution of BSDE in \cite[Theorem~5.3]{zhang2004numerical}, a crucial step is the following equality:
 $$
 \dbE \Big[(Y_{t_{i-1}}-Y_{t_{i-1}}^{\pi})\int_{t_{i-1}}^{t_i} Z_s-Z_{\pi\left(s\right)}^{\pi} dW_s \Big]=0,
 $$
 which clearly requires the adaptivity of $Y_{t_i}^{\pi}$ to the filtration. }

\end{rem}

The next theorem provides an upper bound for $\Var\left[Y_0^\pi\right]$, which can be sufficiently small subject to universal approximation.
\begin{thm}
\label{easy case for upper bound}
Under Assumptions~\ref{assump:l2regularity}, \ref{assump:FKformula} and  \ref{assump:minimizeVar}{, by} the Nonlinear Feymann-Kac Formula, we have $Z_t=\nabla_x u(t,X_t)\sigma(t,X_t)$. Assume further that the function $\nabla_x u\left(t,x\right) \sigma(t,x)$ is Lipchitz with respect to $x$, with a Lipchitz constant $L$. Then for sufficiently small $h$, we have
$$
\Var\left(Y_0^{\pi}\right)\leq C\Big[h+\sum_{0\leq i\leq n-1}\dbE\left|f_i(X_{t_i}^{\pi})-Z_{t_i}^{\pi}\right|^2h\Big], \nonumber
$$
where $f_i\left(x\right)=\nabla_{x}u(t_i,x) \sigma(t_i,x)$.
\end{thm}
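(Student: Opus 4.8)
The plan is to bound the variance from above by a second moment, control that second moment by a discrete $Z$-error, and finally identify the $Z$-error with the stated network-approximation error modulo an $O(h)$ remainder. First I would reduce to a second-moment estimate: by the nonlinear Feynman–Kac formula (Assumption~\ref{assump:FKformula}), $Y_0 = u(0,x)$ is deterministic, so $\Var(Y_0^\pi)=\Var(Y_0^\pi-Y_0)\le \dbE|Y_0^\pi-Y_0|^2$, and since $Y_0^\pi-Y_0$ is one of the grid differences,
\[
\Var(Y_0^\pi)\le \max_{0\le i\le n}\dbE\big|Y_{t_i}-Y_{t_i}^\pi\big|^2.
\]
It therefore suffices to bound the right-hand side, which side-steps the non-adaptivity issue that obstructed the cross-term cancellation in Theorem~\ref{thm:posteriorbdd}, since here only Cauchy–Schwarz and triangle inequalities are used.

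Next I would control the $Y$-error by the $Z$-error, repeating the recursion already carried out in the proof of Theorem~\ref{thm:posteriorbdd}: expand $Y_{t_i}-Y_{t_i}^\pi$ through the BSDE, apply the Lipschitz/Hölder bounds on $f$, isolating the $y$-increment with the sharp constant $K_y$ from Assumption~\ref{assump:minimizeVar} and bounding the $t,x,z$-increments by the generic constant $K$ of Assumption~\ref{assump:l2regularity}, and then use $6T^2K_y<1$ to absorb the $Y$-terms after maximizing over $i$. This gives
\[
\max_{0\le i\le n}\dbE\big|Y_{t_i}-Y_{t_i}^\pi\big|^2 \le C\Big[h+\dbE\int_0^T\big|Z_s-Z_{\pi(s)}^\pi\big|^2\ud s\Big].
\]
Crucially, unlike Theorem~\ref{thm:posteriorbdd}, I do not need the smallness $K_z<\frac{1}{4T}$ here, because the $Z$-error is to be bounded outright in the final step rather than reabsorbed into the left-hand side.

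The heart of the argument, and the step I expect to be the main obstacle, is converting the continuous-time $Z$-error into the discrete approximation error. Writing $Z_{\pi(s)}^\pi=Z_{t_i}^\pi$ on $[t_i,t_{i+1})$ and using the Feynman–Kac identity $Z_{t_i}=\nabla_x u(t_i,X_{t_i})\sigma(t_i,X_{t_i})=f_i(X_{t_i})$, I decompose
\[
Z_s-Z_{t_i}^\pi=\big(Z_s-Z_{t_i}\big)+\big(f_i(X_{t_i})-f_i(X_{t_i}^\pi)\big)+\big(f_i(X_{t_i}^\pi)-Z_{t_i}^\pi\big).
\]
Squaring, taking expectations and summing over subintervals: the first term integrates to $O(h)$ by the path regularity of Proposition~\ref{prop:pathregularity}; the second is bounded by $L^2\,\dbE|X_{t_i}-X_{t_i}^\pi|^2$ via the assumed Lipschitz continuity of $\nabla_x u\,\sigma$, which sums to $O(h)$ using the standard strong $L^2$-error of the Euler–Maruyama scheme, $\dbE|X_{t_i}-X_{t_i}^\pi|^2\le Ch$ (valid under Assumption~\ref{assump:l2regularity} for $h$ small); and the third term summed over the grid is exactly $\sum_{i}\dbE|f_i(X_{t_i}^\pi)-Z_{t_i}^\pi|^2\,h$. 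Hence $\dbE\int_0^T|Z_s-Z_{\pi(s)}^\pi|^2\ud s\le C[h+\sum_i\dbE|f_i(X_{t_i}^\pi)-Z_{t_i}^\pi|^2 h]$, and chaining the three steps yields the claim. The genuine difficulty lies in this last decomposition: one must invoke the PDE regularity (Assumption~\ref{assump:FKformula}) and the Lipschitz property of $\nabla_x u\,\sigma$ to replace the true $Z$ by the network target $f_i$, while absorbing the mismatch between the true path $X_{t_i}$ and its Euler simulation $X_{t_i}^\pi$; the requirement of sufficiently small $h$ enters precisely through this Euler strong-error bound and the closing of the discrete recursion in the second step.
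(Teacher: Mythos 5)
Your proposal is correct, and its core is the same as the paper's: both reduce $\Var(Y_0^\pi)$ to the continuous-time $Z$-error $\dbE\int_0^T|Z_s-Z_{\pi(s)}^\pi|^2\,\ud s$ by way of the recursion \eqref{y respect to z} (which, as you correctly observe, only needs $6T^2K_y<1$ from Assumption~\ref{assump:minimizeVar}, with the $z$-increment handled by the generic Lipschitz constant $K$ of Assumption~\ref{assump:l2regularity} and no smallness condition on the $z$-constant), and both then convert that $Z$-error into $\sum_{i}\dbE|f_i(X_{t_i}^\pi)-Z_{t_i}^\pi|^2h+O(h)$ by the same splitting: path regularity of $Z$ from Proposition~\ref{prop:pathregularity}, the Lipschitz property of $f_i=\nabla_x u(t_i,\cdot)\sigma(t_i,\cdot)$ combined with the Euler strong error $\dbE|X_{t_i}-X_{t_i}^\pi|^2\le Ch$, and the remaining network-approximation term. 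The only place you deviate is the opening reduction: the paper expands $\Var(Y_0^\pi)$ through the gap in the BSDE representation (terminal mismatch, driver mismatch, and the stochastic integral $\int_0^T(Z_s-Z_{\pi(s)}^\pi)\,\ud W_s$), applies $(a+b)^2\le 2(a^2+b^2)$ and $\Var(X)\le \dbE X^2$, and only then invokes \eqref{y respect to z}, whereas you use that $Y_0=u(0,x)$ is deterministic to write $\Var(Y_0^\pi)\le \dbE|Y_0^\pi-Y_0|^2\le \max_{i}\dbE|Y_{t_i}-Y_{t_i}^\pi|^2$ and invoke \eqref{y respect to z} immediately. Your shortcut is valid (the variance is the minimal mean-square deviation from a constant, and $Y_0$ is a constant), and it renders the first half of the paper's proof unnecessary; both routes land on the same intermediate bound $\Var(Y_0^\pi)\le C\big[h+\dbE\int_0^T|Z_s-Z_{\pi(s)}^\pi|^2\,\ud s\big]$ before the final decomposition, so what your version buys is simply a shorter argument with the same constants and hypotheses.
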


\begin{proof}
Since $Y_0 \in \cF_0$ is deterministic, it is clear that $\Var\left(Y_0^\pi\right) =\Var\left(Y_0^\pi-Y_0\right)$. We have
\begin{align}
\Var\left(Y_0^\pi\right)=& \dbE\Bigg\{g\left(X_T^\pi\right)-g\left(X_T\right)+\int_{0}^T f(\pi(s),X^{\pi}_{\pi\left(s\right)},Y^{\pi}_{\bar \pi\left(s\right)},Z^{\pi}_{\pi \left(s\right)})-f\left(s,X_s,Y_s,Z_s\right)\ud s \nonumber\\
&+\int_0^T (Z_s-Z^{\pi}_{\pi \left(s\right)})\ud W_s -\dbE\Big[g(X_T^\pi)-g(X_T) \nonumber\\
&+\int_{0}^T f(\pi(s),X^{\pi}_{\pi(s)},Y^{\pi}_{\bar \pi\left(s\right)},Z^{\pi}_{\pi \left(s\right)})-f\left(s,X_s,Y_s,Z_s\right)\ud s\Big]\Bigg\}^2. \nonumber
\end{align}
Use the inequality $\left(a+b\right)^2 \leq 2\left(a^2+b^2\right)$ produces
\begin{align}
\Var\left(Y_0^\pi\right)&\leq  2\Bigg[\dbE\int_0^T |Z_s-Z^{\pi}_{\pi \left(s\right)}|^2 \ud s+\Var\bigg(g\left(X_T^\pi\right)-g\left(X_T\right)\nonumber\\
&\quad+\int_{0}^T f(\pi(s),X^{\pi}_{\pi(s)},Y^{\pi}_{\bar \pi\left(s\right)},Z^{\pi}_{\pi \left(s\right)})-f\left(s,X_s,Y_s,Z_s\right)\ud s \bigg) \Bigg].\nonumber
\end{align}
The inequality $\Var\left(X\right)\leq \dbE X^2$ then implies:
\begin{align}
\Var\left(Y_0^\pi\right) \leq& 2\Bigg\{\dbE\int_0^T |Z_s-Z^{\pi}_{\pi\left(s\right)}|^2\ud s+\dbE\bigg[g\left(X_T^\pi\right)-g\left(X_T\right)\nonumber\\
&+\int_{0}^T f(\pi\left(s\right),X^{\pi}_{\pi\left(s\right)},Y^{\pi}_{\bar \pi(s)},Z^{\pi}_{\pi \left(s\right)})-f(s,X_s,Y_s,Z_s)\ud s\bigg]^2\Bigg\}\nonumber\\
\leq& 2\Bigg\{\dbE\int_0^T |Z_s-Z^{\pi}_{\pi \left(s\right)}|^2\ud s+2\Bigg(K^2h\nonumber\\
&+\dbE\Big[\int_{0}^T f(\pi(s),X^{\pi}_{\pi\left(s\right)},Y^{\pi}_{\bar \pi\left(s\right)},Z^{\pi}_{\pi \left(s\right)})-f\left(s,X_s,Y_s,Z_s\right)\ud s\Big]^2\bigg)\Bigg\}.\nonumber
\end{align}
The Lipschitz condition of $f$ and Proposition \ref{prop:pathregularity} then leads to
\begin{align}
&\Var\left(Y_0^\pi\right) \nonumber\\
\leq&  Ch+C\dbE\int_0^T |Z_s-Z^{\pi}_{\pi \left(s\right)}|^2\ud s+2T\dbE\Big[\int_0^T|f(\pi(s),X^{\pi}_{\pi(s)},Y^{\pi}_{\bar \pi\left(s\right)},Z^{\pi}_{\pi (s)})\nonumber\\
&-f\left(s,X_s,Y_s,Z_s\right)|^2 \ud s\Big]\nonumber\\
\leq & Ch+C\dbE\int_0^T |Z_s-Z^{\pi}_{\pi \left(s\right)}|^2\ud s\nonumber\\
& +2T\dbE\Big[\int_0^T Ch+K_y\max_{0\leq i\leq n-1}|Y_{t_i}-Y_{t_i}^{\pi}|^2+K^2|Z_s-Z^{\pi}_{\pi \left(s\right)}|^2\ud s\Big]\nonumber\\
\leq & Ch+C\dbE\int_0^T |Z_s-Z^{\pi}_{\pi \left(s\right)}|^2\ud s+C\Big[\max_{0\leq i\leq n-1}\dbE\left|Y_{t_i}-Y_{t_i}^{\pi}\right|^2\Big].\nonumber
\end{align}
Since Assumption~\ref{assump:minimizeVar} holds, using the result in \eqref{y respect to z}, we have
$$
\Var\left(Y_0^{\pi}\right)\leq  Ch+C\dbE\int_0^T |Z_s-Z^{\pi}_{\pi \left(s\right)}|^2\ud s. \nonumber
$$
Apply the path regularity for $Z$ in Proposition \ref{prop:pathregularity}, we get
\begin{equation}
\label{almost last step}
\Var\left(Y_0^{\pi}\right)\leq C\Big[h+\dbE\sum_{0\leq i\leq n-1}|Z_{t_i}-Z_{t_i}^{\pi}|^2h\Big].
\end{equation}
Observe that
\begin{align}
\dbE\left|Z_{t_i}-Z_{t_i}^{\pi}\right|^2\leq& 2\dbE\left|f_i(X_{t_i}^{\pi})-Z_{t_i}^{\pi}\right|^2+2\dbE\left|Z_{t_i}-f_i(X_{t_i}^{\pi})\right|^2 \nonumber\\
=& 2\dbE\left|f_i(X_{t_i}^{\pi})-Z_{t_i}^{\pi}\right|^2+2\dbE\left|f_i(X_{t_i})-f_i(X_{t_i}^{\pi})\right|^2\nonumber\\
\leq & 2\dbE\left|f_i(X_{t_i}^{\pi})-Z_{t_i}^{\pi}\right|^2+2L^2\dbE\left|X_{t_i}-X_{t_i}^{\pi}\right|^2\nonumber\\
\leq& Ch+2\dbE\left|f_i(X_{t_i}^{\pi})-Z_{t_i}^{\pi}\right|^2. \nonumber
\end{align}
Plug it into \eqref{almost last step}, the desired result follows. 
\end{proof}
\section{Relations to Derivative Pricing} \label{sec:derivative}
In this section, we briefly explain how the solution of BSDE is related to optimal stopping problems through option pricing. 

Let us start with European style options with maturity time $T$ and payoff function $g(x)$. Consider $X_t$ as the price dynamics of $d_1$ stock under risk-neutral measure (\cite[Section~2.8.1]{zhang2017backward}). Denote by $r$ the risk-free interest rate, and define $Y_t$ as
$$
    Y_t = \hE\Big[e^{-r\left(T-t\right)}g\left(X_T\right) \vert \cF_t\Big], \nonumber
$$
which is interpreted as the fair option price at time $t$. Notice that, under proper conditions on $g$ (for instance, at most linear growth), $e^{-rt}Y_t = \hE[e^{-rT} g\left(X_T\right)\vert \cF_t]$ is a martingale and has the representation
$
    \ud (e^{-rt} Y_t) = \tilde Z_t \ud W_t \nonumber
$
for some square-integrable process $\tilde Z_t$. Then by It\^{o}'s formula, one has
$
    \ud Y_t = rY_t \ud t + e^{rt} \tilde Z_t \ud W_t. \nonumber
$
Redefining $e^{rt}\tilde Z_t$ by $Z_t$, we achieve the BSDE system \eqref{def:BSDE} with $f(t, x, y, z) = -ry$. The European option price is $Y_0$, and we shall compute {it} numerically by the method~\eqref{def:BdeepBSDE-goal}--\eqref{def:BdeepBSDE-Yt}.

As mentioned in Remark~\ref{rem:advantage}, the backward deep BSDE method is capable of solving optimal stopping problems, in particular, pricing Bermudan options. Denote $\cT := \left\{\tau_0
= 0, \tau_1, \tau_2, \ldots, \tau_N = T\right\}$ be a collection of pre-determined time stamps on $\left[0,T\right]$. For Bermudan-type options, the buyer has the right to exercise the option and obtain payoff $g\left(X_\tau\right)$ at $\tau \in \cT$. Consequently, the fair price at time $t$ can be written as
$$
    Y_t = \sup_{\tau \in \cT, \tau \geq t} \hE\Big[e^{-r\left(T-\tau\right)}g\left(X_\tau\right) \vert \cF_t\Big]. \nonumber
$$
Or in terms of BSDE, $Y_t$ solves
\begin{equation}\label{def:BSDE-stop}
\begin{dcases}
X_t=x+\int_0^t b(s,X_s)\ud s+\int_0^t \sigma(s,X_s)\ud W_s, \nonumber\\
Y_T = g(X_T), \\
Y_t=Y_{\tau_{j+1}}+\int_t^{\tau_{j+1}} f\left(s,X_s,Y_s,Z_s\right)\ud s-\int_t^{\tau_{j+1}} Z_s\ud W_s, \quad t \in \left(\tau_{j}, \tau_{j+1}\right), \; j = N-1, \ldots, 0, \\
Y_{\tau_{j}} = \max\bigg(g(X_{\tau_{j}}), \; Y_{\tau_{j+1}}+\int_{\tau_{j}}^{\tau_{j+1}} f(s,X_s,Y_s,Z_s)\ud s-\int_{\tau_{j}}^{\tau_{j+1}} Z_s\ud W_s\bigg).
\end{dcases}
\end{equation}
Numerically, to adapt the backward deep BSDE scheme \eqref{def:BdeepBSDE-goal}--\eqref{def:BdeepBSDE-Yt} to optimal stopping problems, we need to replace \eqref{def:BdeepBSDE-Yt} by
\begin{align}
&Y_T^{\pi}=g(X_T^{\pi}),\nonumber\\
& Y_{t_i}^{\pi} = Y^{\pi}_{t_{i+1}} + f(t_i,X_{t_i}^{\pi},Y_{t_{i+1}}^{\pi},Z_{t_i}^{\pi})h - Z_{t_i}^{\pi} \Delta W_i, \quad t_i \in \left(\tau_{j}, \tau_{j+1}\right), \nonumber \\
& Y_{t_i}^{\pi} = \max\bigg(g(X_{t_i}^\pi), \; Y^{\pi}_{t_{i+1}} +
 f(t_i,X_{t_i}^{\pi},Y_{t_{i+1}}^{\pi},Z_{t_i}^{\pi})h - Z_{t_i}^{\pi} \Delta W_i\bigg), \quad t_i = \tau_j, \;j = N-1, \ldots, 0, \nonumber
\end{align}
where for convenience, we've assumed $\tau_j = t_i$ for some $i$, and for all $j$.
\section{Numerical Examples}\label{sec:numerics}

For numerical illustration, we consider option pricing problems under the Black-Scholes model, under which European options have explicit formulas, and Bermudan options {have} been extensively benchmarked in the literature. Let $X_t$ be the price dynamics of $d_1$ stocks following:
\begin{equation}\label{def:Xt-BS}
    \ud X_t^i/X_t^i = \left(r - \delta_i\right) \ud t + \sigma_i \ud W_t^i, \quad X_0^i = x_0^i, \quad i = 1, 2, \ldots, d_1,
\end{equation}
where $r$ is the risk-free interest rate, $\delta_i$ is the dividend yield, $\sigma_i$ is the volatility and $W$ is a {$d_1$}-dimensional Brownian motion {($d_1=d$)} with correlation $\ud W_t^i \ud W_t^j = \rho_{ij}$.

{Regarding the hyperparameters in our implementation, we follow the choice in \cite{han2018solving} and use a feedforward neural network with 2 hidden layers, each with $d_1+10$ nodes. The activation function we use is ReLU. For European options, we set the batch size (number of paths) be 256. For Bermudan options, we set the batch size (number of paths) be 4096.

We benchmark the performance using the reference intervals from \cite{herrera2021optimal}. The endpoints of the intervals are determined by the upper bound and lower bound from different existing methods, including least squares Monte Carlo \cite{longstaff2001valuing}, deep optimal stopping \cite{becker2019deep}, neural least square Monte Carlo \cite{lapeyre2021neural}, fitted Q-iteration \cite{li2009learning}, and the two algorithms proposed in \cite{herrera2021optimal}: randomized fitted Q-Iteration, randomized least squares Monte Carlo. In particular, in \cite{herrera2021optimal}, they use randomized neural networks with one hidden layer, each with 20 or $\min(20,d_1)$ (for randomized fitted Q-iteration) nodes, and use leaky ReLU or tanh (for randomized recurrent least squares Monte Carlo) as the activation function. For further details on benchmark values, we refer the reader to \cite[Section~6.1]{herrera2021optimal}.
\subsection{Geometric Put}\label{sec:geoput}
We first consider geometric put options as they can be reduced to one-dimensional problems under the Black-Scholes model \eqref{def:Xt-BS}. In this case, the payoff function $g(\cdot)$ reads
$
    g\left(X_T\right) = \Big(K -(\prod_{i=1}^{d_1} X_T^i)^{1/d_1}\Big)^+, \nonumber
$
where $K$ is the strike price. {Since 
\begin{equation*}
\sigma_{ii}(x)=\sigma_i x_i, \  \sigma_{ij}\left(x\right)=0,  1\leq i,j\leq d_1,  i\neq j,
\end{equation*}
the elliptic condition in Assumption~\ref{assump:l2regularity}: $\sigma\left(x\right)\sigma\left(x\right)\transpose \geq \delta I$ for some positive $\delta$ is not satisfied. As discussed in Section \ref{sec:derivative}, $f(t, x, y, z) = -ry$ in this example, which implies $K_y=r$ in Assumption~\ref{assump:posteriorbdd}. In financial markets, $r$ is usually quite small (below we choose $r=2\%$). However, even with this small $r$, the requirement of $r$ is sufficiently small in Assumption~\ref{assump:posteriorbdd} may not be satisfied. On the other hand, when $d_1\geq 2$ and $K>0$, the Lipchitz condition of $g$ on $x$ in Assumption~\ref{assump:l2regularity} is not satisfied when $x$ is close to $(0,0,\cdots,0)$. Nevertheless, our numerical results below still show the convergence.} 

The problem is equivalent to pricing a 1D put option:
$$\ud \hat X_t/ \hat X_t = \hat \mu \ud t + \hat \sigma \ud B_t,\quad  \hat X_0 = \Big(\prod_{i=1}^{d_1} x_0^i\Big)^{1/d_1}, $$
where $\hat \mu = \frac{1}{d_1}\sum_{i=1}^{d_1} \left(r - \delta_i - \half \sigma_i^2\right) + \half \hat \sigma^2$, $\hat \sigma^2 = \frac{1}{d_1^2}\big(\sum_{i=1}^{d_1} \sigma_i^2 + \sum_{i,j = 1}^{d_1} \sigma_i\sigma_j \rho_{ij}\big)$ and $B$ is an one-dimensional Brownian motion.
Therefore, the explicit formula for European geometric put is:
\begin{equation}\label{eq:geometricput}
     P\left(d_1,r, \delta_i, \sigma_i, \rho_{ij}, K, T, x_0\right)=e^{-rT}\Big[K\Phi\left(-d_-\right)-\hat{S}\Phi\left(-d_+\right)\Big],
\end{equation}
where
$$
         \hat{S}=\Big(\prod_{i=1}^{d_1} x_0^i\Big)^{1/d_1}\cdot e^{\hat \mu T},\nonumber
         \quad 
     d_+= {\ln\big({\hat{S}\over K}\big)+{1\over 2}\hat{\sigma}^2 T\over \hat{\sigma}\sqrt{T}},  \quad 
     d_-=d_+-\hat{\sigma}\sqrt{T},
$$
and $\Phi\left(\cdot\right)$ denotes the cumulative distribution function of a standard normal distribution. For Bermudan geometric put, we take reference values from \cite[Table~4]{herrera2021optimal}. 

Experiment results are summarized in Table~\ref{tab:geometricput}, where parameters are chosen as:
$$
    r = 2\%, \; \delta_i \equiv 0\%, \; \sigma_i \equiv 20\%, \;  \rho_{ij} \equiv 0, \; x_0^i \equiv 100, \; K = 100, \; T = 1, \; N = 10.
$$
\begin{table}[h]
\caption{European and Bermudan geometric put options for a different number of stocks $d_1$. The analytic value for the European option is calculated using formula \eqref{eq:geometricput} and the range of benchmark value for the Bermudan option is taken from \cite[Table~4]{herrera2021optimal}.}\label{tab:geometricput}
\centering
\begin{tabular}{c c c c| c c}
\toprule
$d_1$ &\multicolumn{3}{c}{European}& \multicolumn{2}{c}{Bermudan} \\ \midrule
 & Price $Y_0$ & Analytic & Relative Err &  Price $Y_0$ & Benchmark \\ 
1 & 6.9317 &6.9359 & -0.06\% & 7.12 & [6.92, 7.08]\\
5 & 3.3058 &3.3105& -0.14\% &3.37 & [3.29, 3.35]\\
10 & 2.3758 & 2.3784& -0.11\% &2.42 & [2.35, 2.40]\\
20 & 1.7033 &1.7009 & 0.14\% &1.73 & [1.59, 1.72]\\
50 & 1.0876 & 1.0867& 0.08\%&1.10&\\
\bottomrule
\end{tabular}
\end{table}

{To see the effect of the choice of the partition size $n$, in Table \ref{tab: convergence for different n}, we present the numerical solution for $d_1=20$, with different choices of $n$.
Recall our partition of size $n$ on $\left[0, T\right]$ is defined by $0 = t_0 < t_1 < \ldots < t_n = T$, $t_i = ih$, $h = T/n$. One can observe that the numerical results get closer to the analytic solution or benchmark interval as $n$ increases (or equivalently $h$ decreases). The convergence of numerical results to the European option price is consistent with our Theorem \ref{thm:posteriorbdd}, and the results are already accurate with relatively small $n$ (or equivalently, large $h$). For Bermudan options, the experiment starts with $n \geq 10$ as we choose $N = 10$ pre-determined possible exercise times. 


\begin{table}[h]
\caption{European and Bermudan geometric put options for $d_1=20$ stocks and different $n$. The analytic value for the European option is calculated using formula \eqref{eq:geometricput} and the range of benchmark value for the Bermudan option is taken from \cite[Table~4]{herrera2021optimal}.}\label{tab: convergence for different n}
\centering
\begin{tabular}{c c c c| c c}
\toprule
$n$ &\multicolumn{3}{c}{European}& \multicolumn{2}{c}{Bermudan} \\ \midrule
 & Price $Y_0$ & Analytic & Relative Err &  Price $Y_0$ & Benchmark \\ 
 2 & 1.7406 & 1.7009 & 2.33\% &  & \\
 5 &1.7162 & 1.7009 & 0.9\% &  & \\
 10 & 1.7085 & 1.7009 & 0.45\% & 1.82 &[1.59,1.72] \\
20 & 1.7043 & 1.7009 & 0.2\% & 1.79 & [1.59,1.72]\\
50 & 1.7018 &1.7009& 0.05\% & 1.77 &  [1.59,1.72]\\
100& 1.7014 &1.7009& 0.03\% & 1.76 & [1.59,1.72]\\
150 & 1.7002 &1.7009 & -0.04\% & 1.75 & [1.59,1.72]\\
200 &  1.7010 & 1.7009 & 0.01\%&  1.73 & [1.59,1.72]\\
\bottomrule
\end{tabular}
\end{table}}

\subsection{Basket Call}

Next, we test on a basket call option with payoff function 
$
    g(X_T) = \Big(\frac{1}{d_1}\sum_{i=1}^{d_1} X_T^i - K\Big)^+, \nonumber
$
where $X_t$ follows \eqref{def:Xt-BS}. {Similarly as in Section~\ref{sec:geoput}, the elliptic condition in Assumption~\ref{assump:l2regularity} and the smallness requirement of $K_y$ in Assumption~\ref{assump:posteriorbdd} are not satisfied. Nevertheless, we can still obtain convergence results as below. }



Here, one can no longer reduce it to a 1D problem. For the European style options, we use the forward deep BSDE method \cite{han2017deep,han2018solving} as a benchmark, and for the Bermudan style options, we use reference values from \cite{herrera2021optimal}. 
The numerics are summarized in Table~\ref{tab:basketcall}, where parameters are chosen as the same in Table \ref{tab:geometricput}. In both experiments, we are able to see that the performance of the backward deep BSDE algorithm is consistent with the state-of-the-art methods. 
\begin{table}[h]
\caption{European and Bermudan basket call options for a different number of stocks $d_1$. The value $Y_0$ (Forward Scheme) is produced by the forward deep BSDE scheme \cite{han2017deep,han2018solving} and the range of benchmark values for Bermudan option are taken from \cite[Table~3]{herrera2021optimal}}\label{tab:basketcall}
\centering
\begin{tabular}{c c c c| c c}
\toprule
$d_1$ &\multicolumn{3}{c}{European}& \multicolumn{2}{c}{Bermudan} \\ \midrule
 & Price $Y_0$ & $Y_0$ (Forward Scheme) & Relative Err &  Price $Y_0$ & Benchmark \\ 
5 & 4.6343 &4.64& -0.12\% & 4.66 & [4.44, 4.65]\\
10 & 3.6305 & 3.6312& -0.02\% &3.65  & [3.38, 3.63]\\
20 &2.9467 & 2.9477& -0.03\%& 2.96& \\
50 & 2.3840 &2.3859&-0.08\% & 2.40 & [1.66, 2.37]\\
\bottomrule
\end{tabular}
\end{table}
\section{Conclusion} \label{sec:conclusing}

This paper studies the convergence of the deep BSDE method algorithm proposed by Wang \emph{et al.} \cite{wang2018deep}. We show that under appropriate conditions, the posterior error of the numerical solution can be sufficiently small, subject to universal approximations. For numerical experiments, as the back deep BSDE is capable of computing the optimal stopping problem, we present Bermudan option pricing as an example, which is quite common in financial markets. The performance is pretty well and consistent with the state-of-the-art methods. We remark that, although the numerical results work well, the theoretical convergence of the algorithm for optimal stopping problems remains open, and we leave it as future work.

\section*{Acknowledgement}
{The authors would like to thank the valuable comments and suggestions from the anonymous referee.} The authors would like to thank Professor Jianfeng Zhang for fruitful discussions. 
Part of this work was done when C.G. and S.G. were visiting students at the University of California, Santa Barbara, whose hospitality is greatly appreciated.
R.H. was partially supported by the NSF grant DMS-1953035, the Faculty Career Development Award, the Regents' Junior Faculty Fellowship, the Research Assistance Program Award, and the Early Career Faculty Acceleration funding at the University of California, Santa Barbara.

\bibliographystyle{plain}
\bibliography{bibliography.bib}

\end{document}